	\newtheorem{teo}{Theorem}
	\newtheorem{example}{Example}
	\newtheorem{zadacha}{Problem}
\begin{document}
		

\begin{center}		
	{\bf Hornbook of solving problems for mixed parabolic--hyperbolic equations} \\
\end{center}	

\begin{center}
		\textbf{Elina L. Shishkina}\\
	Voronezh State University, Voronezh, Russia\\
	Belgorod State University, Belgorod, Russia\\
shishkina@amm.vsu.ru\\
	\textbf{Azamat V. Dzarakhohov}\\
Gorsky State Agrarian University, Vladikavkaz, Russia\\
azambat79@mail.ru\\
\end{center}

	{\bf  The first author would like to thank the Isaac Newton Institute for Mathematical Sciences, Cambridge, for support and hospitality during the programme "Fractional differential equations"\, where work on this paper was undertaken. This work was supported by EPSRC grant no EP/R014604/1.}

\vskip 1cm

{\bf Keywords}: parabolic--hyperbolic equation

{\bf Abstract}. In this small paper, we study a boundary value problem for an equation of parabolic-hyperbolic type. The goal is to show how we can prove existence and uniqueness theorem for a regular
solution.  


\section{Introduction}

One of the most fascinating recent areas of partial differential equations is known as the theory of boundary value problems for equations of mixed type. 
Study of boundary-value problems for mixed-type equations are needed knowledge from the different fields of mathematics.  The proof of the uniqueness and existence of a solution is usually based on  methods of fractional differentiation, special functions, and integral equations. 
However, it was noticed that all parers, in which mixed equations of parabolic-hyperbolic type are studied, contain the same sequence of actions. Only the formulas in specific steps become more complicated. In this paper we would like to present this algorithm and illustrate it by the simplest example.


\section{Basic principles of solving problems for mixed parabolic--hyperbolic equations}

In this section, we consider the main approaches to solving problems for mixed parabolic--hyperbolic equations. 

Let us consider 
mixed parabolic--hyperbolic equation
\begin{equation}\label{SZ01}
	0=\begin{cases}
		L_1u(x,y),&y>0,\\
		L_2 u(x,y),&y<0,
	\end{cases}
\end{equation}
where $L_1$ is differential operator of parabolic type acting in variables $(x,y)$ and $L_2$ is differential operator of hyperbolic type also acting in variables   $(x,y)$. Equation \eqref{SZ01} is considered in a bounded domain $D$. Domain $D$ is divided by the $Ox$ axis into two non-empty domains  $D_1$ and $D_2$. The first equation $L_1u(x,y)=0$ is considered in the domain $D_1$. The second equation  $L_2u(x,y)=0$ is considered in the domain $D_2$. 
Some conditions are added to \eqref{SZ01} on all or part of the boundary of $D$. 
Usually, a solution should be continuous in $\overline{D}$ with some additional smoothness requirements in  $D_1$, $D_2$ and parts of their boundaries. Such a solution is called regular.
 As a rule, in such problems solutions of the equations $L_1u(x,y)=0$ and $L_2u(x,y)=0$ are known separately, and the main problem is to glue these solutions and to obtain  continuous on $D$ function $u(x,y)$ which is satisfied all other necessary conditions.

Let $L_1$ and $L_2$ be non-degenerate differential operators, $L_1$ be a differential operator of parabolic type, and $L_2$ be of second-order hyperbolic type. 
Different conditions are added to equation \eqref{SZ01} in the domains $D_1$ and $D_2$, which must be matched on the line $y=0$.
Smoothness requirements are imposed on these conditions, taking into account the continuous gluing of the solutions in $D_1$ and in $D_2$ along the line $y=0$.
Uniqueness and existence of the solution.
 to such problem usually can be proved by the following algorithm. 

\begin{center}
	{\bf Algorithm of proving the uniqueness and existence of a solution to the  mixed parabolic--hyperbolic equation.} 
\end{center}
\begin{description}
	\item[Step 1.] Assume that a solution $u(x,y)$ exists. We write conditions on the line $y=0$ in the form $u(x,0)=\tau(x)$, $u_y(x,0)= \nu(x)$ and compose for the functions $\tau(x)$, $\nu(x)$ an equation using the equality $L_1u(x,y)=0$ for $y=0$. This equation gives the first relation between the functions $\tau(x)$, $\nu(x)$. 
	\item[Step 2.] Solving for $y<0$ the problem  $L_2u(x,y)=0$, 	$u(x,0)=\tau(x)$, $u_y(x,0)=\nu(x)$ and using additional conditions we obtain the second  relation between the functions $\tau(x)$, $\nu(x)$.
	\item[Step 3.] Since we have two relations between   functions $\tau(x)$ and $\nu(x)$, it is usually possible to get the equation only for one function $\tau (x)$ or $\nu(x)$ from this system. Considering the problem for one of this function with homogeneous condition
	we obtain that it has only a trivial solution, identically equal to zero. Therefore, the second function under homogeneous conditions is identically equal to zero. That means that the solution $u(x,y)$ in $D_2$ is identically equal to zero under homogeneous conditions, and the uniqueness of $u(x,y)$ in $D_2$ is proved.
		\item[Step 4.] Studing the solution of the problem in a parabolic domain, we find that under homogeneous conditions this problem has only a trivial solution in $D_1$. So we prove the uniqueness of the solution in $D$.
			\item[Step 5.] To prove the existence of a solution, we again consider the problem for $\tau(x)$ or $\nu(x)$, but now with inhomogeneous conditions. Usually, for this solution the existence theorem is known. Also, it can often be written explicitly.
			If we can write explicit expressions for  $\tau(x)$ and $\nu(x)$ we can write explicit expression for $u(x,y)$ in each domain $D_1$ and $D_2$ and this solution $u(x,y)$ will be continuous in $D$.
\end{description}


\section{The simplest mixed parabolic-hyperbolic equation}
 
 In order to illustrate the algorithm from the previous section, let us consider the simplest mixed parabolic-hyperbolic equation
\begin{equation}\label{PG01}
	0=\begin{cases}
		u_{xx}-u_y,&y>0,\\
		u_{yy}-u_{xx},&y<0.
	\end{cases}
\end{equation}
in a simply connected domain $D$ of the plane of variables $x,y$ bounded by segments $AA_0$ of the line $x=0$, $A_0B_0$ of the line $x=1$, $y=1$ and  for $y<0$ by real characteristics $AC:x+y=0$, $BC:x-y=1$ of the equation \eqref{PG01} coming from the points $A(0,0)$, $B(1,0 )$. Let $D=D_1\cup D_2$, where $D_1$ is the parabolic part of $D$, $y>0$, and $D_2$ is the hyperbolic part of $D$, $y<0$.

A regular solution of \eqref{PG01} in $D$ is a function $u(x,y){\in}C(\overline{D}){\cap} C^2(D)$ satisfying the equation \eqref{PG01}.

\begin{zadacha}\label{PG02}
	Find a solution $u(x,y)$ of the equation \eqref{PG01} that is regular in $D$ and satisfies the conditions
	\begin{equation}\label{PG03}
		u(0,y)=\varphi_0(y),\qquad 	u(1,y)=\varphi_1(y),\qquad y>0,
	\end{equation}
	\begin{equation}\label{PG04}
		au\left(\frac{x}{2},-\frac{x}{2}\right) +bu\left(\frac{x+1}{2},\frac{x-1}{2}\right)=\psi(x),\qquad 	0\leq x\leq 1,
	\end{equation}
where $\varphi_0(y)$, $\varphi_1(y)$, $\psi(x)$ are given functions, $a$, $b$ are given real constants, $a^2+b^2>0$.
\end{zadacha}

In order to match the conditions \eqref{PG03} and \eqref{PG04}, we multiply \eqref{PG04} first by $a$ and take a limit $x\rightarrow +0$, and then we multiply \eqref{PG04} by $b$ and take a limit $x\rightarrow 1-0$. 
Next we subtract the second equality from the first equality:
$$
-\left\{ \begin{array}{ll}
	a^2\lim\limits_{y\rightarrow +0}\varphi_0(y)  +abu\left(\frac{1}{2},-\frac{1}{2}\right)=a\psi(0);\\
	abu\left(\frac{1}{2},-\frac{1}{2}\right) +b^2\lim\limits_{y\rightarrow +0} \varphi_1(y)=b\psi(1).\end{array} \right.
$$
We obtain
\begin{equation}\label{PG05}
	a^2\lim\limits_{y\rightarrow +0} \varphi_0(y)-b^2 \lim\limits_{y\rightarrow +0} \varphi_1(y)=a\psi(0)-b\psi(1).
\end{equation}

\begin{teo}
If	$\varphi_0(y),\varphi_1(y)\in C[0,1]$, $\psi(x)\in C^1[0,1]\cap C^2(0,1)$, 
	and the conditions  $a\neq b$ 
	and \eqref{PG05}  are valid, then problem \ref{PG02} has a unique regular solution.
\end{teo}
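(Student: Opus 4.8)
The plan is to follow the five-step algorithm and reduce Problem \ref{PG02} to a boundary value problem for a single ordinary differential equation on the trace $\tau(x)=u(x,0)$. Writing $\tau(x)=u(x,0)$ and $\nu(x)=u_y(x,0)$ and evaluating the parabolic equation $u_{xx}=u_y$ on $y=0$ gives the first relation $\nu(x)=\tau''(x)$ (Step 1). For Step 2 I solve the wave equation $u_{yy}=u_{xx}$ in $D_2$ with Cauchy data $\tau,\nu$ on $y=0$; by d'Alembert's formula one has $u(X,Y)=\tfrac12[\tau(X+Y)+\tau(X-Y)]+\tfrac12\int_{X-Y}^{X+Y}\nu$, so that on the two characteristics
\[ u\Big(\tfrac x2,-\tfrac x2\Big)=\tfrac12[\tau(0)+\tau(x)]-\tfrac12\int_0^x\nu,\qquad u\Big(\tfrac{x+1}2,\tfrac{x-1}2\Big)=\tfrac12[\tau(x)+\tau(1)]-\tfrac12\int_x^1\nu. \]
Substituting these into the gluing condition \eqref{PG04}, inserting $\nu=\tau''$, and using $\int_0^x\tau''=\tau'(x)-\tau'(0)$ etc., I obtain the first-order linear equation
\[ \tfrac{b-a}2\tau'(x)+\tfrac{a+b}2\tau(x)=\psi(x)-\tfrac a2\tau(0)-\tfrac b2\tau(1)-\tfrac a2\tau'(0)+\tfrac b2\tau'(1), \]
together with $\tau(0)=\varphi_0(0)$, $\tau(1)=\varphi_1(0)$. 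The hypothesis $a\ne b$ is exactly what keeps the coefficient $\tfrac{b-a}2$ of $\tau'$ nonzero, so this is a genuine first-order equation.

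For uniqueness (Steps 3--4) I take homogeneous data, so that $\tau(0)=\tau(1)=0$ and the right-hand side collapses to the constant $M=-\tfrac a2\tau'(0)+\tfrac b2\tau'(1)$. Solving the resulting constant-coefficient equation and imposing both endpoint conditions $\tau(0)=\tau(1)=0$ forces $M=0$ and $\tau\equiv0$; the cases $a+b\ne0$ and $a+b=0$ are treated separately, the former using that $e^{-(a+b)/(b-a)}\ne1$. Then $\nu=\tau''\equiv0$, so the d'Alembert formula gives $u\equiv0$ in $D_2$, and the heat problem in $D_1$ with zero initial and lateral data has only the trivial solution by the maximum principle. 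This proves uniqueness in all of $D$.

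For existence (Step 5) I solve the inhomogeneous equation. A first-order equation carries a single integration constant, yet I must satisfy the two endpoint conditions $\tau(0)=\varphi_0(0)$, $\tau(1)=\varphi_1(0)$ while the right-hand side itself contains the unknown derivatives $\tau'(0),\tau'(1)$. Treating $M=\tfrac a2\tau'(0)-\tfrac b2\tau'(1)$ as a parameter and evaluating the relation at $x=0$ and $x=1$ yields two scalar equations whose consistency is precisely condition \eqref{PG05}. Hence, under \eqref{PG05}, the reduced problem is solvable, and a bootstrap from $\psi\in C^1[0,1]\cap C^2(0,1)$ gives $\tau\in C^2[0,1]\cap C^3(0,1)$, so that $\nu=\tau''$ is admissible. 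I then define $u$ in $D_2$ by the d'Alembert formula, which satisfies \eqref{PG04} by construction, and in $D_1$ as the classical solution of the heat problem with initial datum $\tau$ and lateral data $\varphi_0,\varphi_1$; continuity of $u$ and of $u_y=u_{xx}$ across $y=0$ furnishes the gluing and the required membership in $C(\overline D)$.

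The main obstacle is the nonlocal, over-determined character of the reduced equation: a first-order ODE cannot in general meet two boundary conditions, and the endpoint terms $\tau'(0),\tau'(1)$ couple the equation to its own data. The heart of the proof is to show that \eqref{PG05} is exactly the compatibility relation removing this obstruction, while $a\ne b$ guarantees the problem is first-order; keeping careful track of the constants is what ensures that the matched function is genuinely a regular solution on $D$.
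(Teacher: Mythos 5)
Your argument is correct in substance and follows the paper's five-step scheme (trace relations $\nu=\tau''$, d'Alembert in $D_2$, reduction to an ODE for $\tau$, then the heat problem in $D_1$), but the reduction itself is genuinely different. The paper differentiates the gluing condition \eqref{PG04} once, which kills the integrals and the endpoint constants and yields the local second-order two-point problem \eqref{DP01}--\eqref{DP02}; that problem is put in self-adjoint form and solved by standard Sturm--Liouville theory, with uniqueness obtained from the energy identity $\int_0^1\tau\nu\,d\xi=\frac{a+b}{2(a-b)}(\tau^2(1)-\tau^2(0))=0$ combined with $\nu=\tau''$ (equation \eqref{PG13}), and uniqueness in $D_1$ read off from the Green's function representation rather than the maximum principle. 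You instead keep the undifferentiated relation, obtaining a first-order equation whose right-hand side contains the nonlocal constants $\tau'(0),\tau'(1)$, prove uniqueness by solving the constant-coefficient equation explicitly, and treat existence as a compatibility problem. Your route is more laborious (an overdetermined nonlocal problem instead of a well-posed local one), but it buys something the paper leaves implicit: the paper only ever enforces the differentiated relation \eqref{PG11}, so the constructed solution satisfies \eqref{PG04} a priori only up to an additive constant, and \eqref{PG05} is exactly what kills that constant (since $a F(0)-bF(1)=(a-b)F$ for the constant discrepancy $F$); your framework makes this role of \eqref{PG05} and of $a\neq b$ completely transparent. The one place you should add detail is the existence step: the two endpoint evaluations of your first-order equation form a \emph{singular} linear system in $(\tau'(0),\tau'(1))$ (its determinant is identically zero), so they do not determine the constant $M$; the clean statement is that the three conditions --- $\tau(0)=\varphi_0(0)$, $\tau(1)=\varphi_1(0)$, and the self-consistency of $M$ --- on the two free parameters (integration constant and $M$) are solvable precisely when \eqref{PG05} holds, and verifying the nonvanishing of the relevant $2\times2$ determinant requires the separate cases $a+b\neq0$ (using $e^{(a+b)/(a-b)}\neq1$) and $a+b=0$, just as in your uniqueness step.
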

\begin{proof} 
	\begin{description} 
	\item[Step 1.]	
	Suppose that there is a solution $u(x,y)$ to the problem \ref{PG02}.
	Let us prove that it is unique.
Let us introduce the notation
	\begin{equation}\label{PG06}
		\tau(x)=\lim\limits_{y\rightarrow +0} u(x,y),\qquad x\in[0,1],
	\end{equation}
	\begin{equation}\label{PG07}
		\nu(x)=\lim\limits_{y\rightarrow +0}u_y(x,y),\qquad x\in(0,1).
	\end{equation}
From the conditions of the problem we obtain
	\begin{equation}\label{PG08}
		\tau(0)=\lim\limits_{y\rightarrow +0}u(0,y)=\varphi_0(0),\qquad \tau(1)=\lim\limits_{y\rightarrow +0} u(1,y)=\varphi_1(0).
	\end{equation}
The relation between $\tau(x)$ and $\nu(x)$ brought from the parabolic region $D_1$ has the form
	\begin{equation}\label{PG09}
		\nu(x)=\tau''(x).
	\end{equation}
So we obtain the first relation between the functions $\tau(x)$, $\nu(x)$ in the form \eqref{PG09}. 

	\item[Step 2.] Solution to the Cauchy problem
	$$
	u_{xx}-u_{yy}=0,\qquad y<0,
	$$
$$
	u(x,0)=\tau(x),\qquad 	u_y(x,0)=\nu(x)
$$
is
	\begin{equation}\label{PG10}
		u(x,y)=\frac{\tau(x+y)+\tau(x-y)}{2}-\frac{1}{2}\int\limits_{x+y}^{x-y}\nu(\xi)d\xi.
	\end{equation}
From the condition \eqref{PG04} we obtain
	$$
	\frac{a}{2}\left(\tau(0)+\tau(x)-\int\limits_{0}^{x}\nu(\xi)d\xi \right)+\frac{b}{2}\left(\tau(1)+\tau(x)-\int\limits_{x}^{1}\nu(\xi)d\xi \right) =\psi(x)
	$$
Differentiating this equality with respect to $x$, we obtain
	\begin{equation}\label{PG11}
		(a+b)\tau'(x)-(a-b)\nu(x)=2\psi_1'(x).
	\end{equation}
Expressing $\nu(x)$ from \eqref{PG11}, we get
	\begin{equation}\label{PG12}
		\nu(x)=\frac{a+b}{a-b}\tau'(x)-\frac{2}{a-b}\psi'(x).
	\end{equation}
So we obtain the second relation between the functions $\tau(x)$, $\nu(x)$ in the form \eqref{PG12}. 	

\item[Step 3.]
In order to prove the uniqueness of solution to the Problem \ref{PG02} let consider the homogeneous conditions $\varphi_0(y)\equiv 0$,
$\varphi_1(y)\equiv0 $, $\psi(x)\equiv0$. 
Then $\tau(0)=0$, $\tau(1)=0$.
Let us show that it is possible only for $\tau(x)\equiv0 $.
For $\psi_1(x)\equiv 0$, we can write the equality \eqref{PG12} as
\begin{equation}\label{PG122}
	\nu(x)=\frac{a+b}{a-b}\tau'(x).
\end{equation}
Then
$$
\int\limits_{0}^{1}\tau(\xi)\nu(\xi)d\xi=\frac{a+b}{a-b}\int\limits_{0}^{1}\tau(\xi)\tau'(\xi)d\xi=
$$	
\begin{equation}\label{PG13}	
	=\frac{1}{2}\frac{a+b}{a-b}\int\limits_{0}^{1}[\tau^2(\xi)]'d\xi=\frac{1}{2}\frac{a+b}{a-b}(\tau^2(1)-\tau^2(0))=0.
\end{equation}	
Therefore, $\tau(x)=const$, and since $\tau(0)=0$, then $\tau(x)\equiv0 $.

By \eqref{PG122} then $\nu(x) \equiv0$. From this and \eqref{PG10} we conclude that $u(x,y)\equiv 0$
in $D_2$ for $\varphi_0(y)\equiv 0$,
$\varphi_1(y)\equiv0 $, $\psi_1(x)\equiv0$. Uniqueness in $D_2$ is proved.

\item[Step 4.] In the domain $D_1$, the solution to the problem
$$
u_{xx}- u_y=0,
$$
$$
u(0,y)=\varphi_0(y),\qquad 	u(1,y)=\varphi_1(y),
$$
$$
\lim\limits_{y\rightarrow +0}u(x,y)=\tau(x),\qquad x\in[0,1],
$$
is
(see \cite{Polyanin}, p. 62):
\begin{equation}\label{BZHU10}
	u(x,y){=}\int\limits_0^y  G_\xi(x,y;0,\tau)\varphi_0(\tau)d\tau-\int\limits_0^y  G_\xi(x,y;1,\tau)\varphi_1(\tau)d\tau
	+\int\limits_0^1G(x,y;\xi,0)\tau(\xi) d\xi,
\end{equation}
where
\begin{equation}\label{G0}
	G(x,y;\xi,\eta)=2\sum\limits_{n=1}^\infty \sin(\pi n x)\sin(\pi n \xi)e^{-n^2\pi^2 (y-\eta)}.
\end{equation}	 
This representation of the solution follows that for $\varphi_0(y)\equiv0$, $\varphi_1(y)\equiv0$, $\tau(x)\equiv 0$ in $D_1$ the solution $u(x,y )\equiv0$.
It means that the homogeneous problem has only a trivial solution, therefore, if the solution to the problem \eqref{PG02} exists, then it is unique.
 
	\item[Step 5.] Let us prove the existence of a solution. Substitution $\nu(x)=\tau''(x)$ into \eqref{PG11} gives
 $$
 (a+b)\tau'(x)-(a-b)\tau''(x)=2\psi_1'(x).
 $$
or
 \begin{equation}\label{DP01}
 	\tau''(x)-\frac{a+b}{a-b}\tau'(x)=-\frac{2}{a-b}\psi'(x)
 \end{equation}
We add  conditions
 \begin{equation}\label{DP02}
 	\tau(0)=\varphi_0(0),\qquad \tau(1)=\varphi_1(0).
 \end{equation}
to \eqref{DP01}.
Multiplying \eqref{DP01} on $e^{-\frac{a+b}{a-b}x}$ we obtain
$$
\frac{d}{dx}\left(e^{-\frac{a+b}{a-b}x}\frac{d\tau}{dx} \right) =-\frac{2}{a-b}e^{-\frac{a+b}{a-b}x}\psi'(x).
$$
Since $p(x)=e^{-\frac{a+b}{a-b}x}\in C^1[0,1]$, $f(x)=-\frac{2}{a-b} e^{-\frac{a+b}{a-b}x}\psi'(x)\in C[0,1]\cap C^1(0,1)$ because $\psi (x)\in C^1[0,1]\cap C^2(0,1)$, then $\tau(x)$  exists and $\tau(x)\in C^2[0, 1]$.
Therefore, by \eqref{PG12} $\nu(x) \in C[0,1]\cap C^1(0,1)$ and $u(x,y)\in C(\overline{D} ){\cap} C^2(D)$.
\end{description}
\end{proof}

\begin{example} Solve the equation \eqref{PG01} with conditions
	\begin{equation}\label{Ex01}
		u(0,y)=1-y,\qquad 	u(1,y)=y,\qquad y>0,
	\end{equation}
	\begin{equation}\label{Ex02}
		2u\left(\frac{x}{2},-\frac{x}{2}\right) -u\left(\frac{x+1}{2},\frac{x-1}{2}\right)=4x,\qquad 	0\leq x\leq 1,
	\end{equation}
\end{example}
We have $a=2$, $b=-1$, $\varphi_0(y)=1-y$, $\varphi_1(y)=y$, $\psi(x)=x$.
Equality \eqref{PG05} is valid.
Let's find a solution to the problem
$$
\tau''(x)-\frac{1}{3}\tau'(x)=-\frac{2}{3}, 
$$
$$
\tau(0)=1,\qquad \tau(1)=0.
$$
We obtain
$$
\tau(x)=\frac{1}{\sqrt[3]{e}-1}(2 \sqrt[3]{e} x-2 x-3 e^{x/3}+\sqrt[3]{e}+2)
$$
and
$$
\nu(x)=\frac{1}{3 \left(1-\sqrt[3]{e}\right)}e^{x/3}.
$$
For $y<0$
$$
u(x,y)=\frac{-e^{\frac{x-y}{3}}-2 e^{\frac{x+y}{3}}-2 x+\sqrt[3]{e} (2 x+1)+2}{\sqrt[3]{e}-1},
$$
and for $y<0$ 
$$
u(x,y){=}\int\limits_0^y  G_\xi(x,y;0,\tau)(1-\tau)d\tau-\int\limits_0^y  G_\xi(x,y;1,\tau) \tau d\tau
+\int\limits_0^1G(x,y;\xi,0) \xi d\xi,
$$
where
$$
G(x,y;\xi,\eta)=2\sum\limits_{n=1}^\infty \sin(\pi n x)\sin(\pi n \xi)e^{-n^2\pi^2 (y-\eta)}.
$$	
Therefore
$$
u(x,y){=}2\sum\limits_{n=1}^\infty \pi n\sin(\pi n x)\int\limits_0^y  e^{-n^2\pi^2 (y-\tau)}(1-\tau)d\tau-
2\sum\limits_{n=1}^\infty (-1)^n\pi n\sin(\pi n x)\int\limits_0^y e^{-n^2\pi^2 (y-\tau)} \tau d\tau+
$$
$$
+2\sum\limits_{n=1}^\infty \sin(\pi n x)e^{-n^2\pi^2 y}\int\limits_0^1\sin(\pi n \xi) \xi d\xi=
$$
$$
2\sum\limits_{n=1}^\infty \frac{\sin(\pi n x)}{\pi^3n^3}
\left( 1-\pi ^2 n^2 (y-1)-e^{-\pi ^2 n^2y} (\pi ^2 n^2+1)\right) -
2\sum\limits_{n=1}^\infty (-1)^n\frac{\sin(\pi n x)}{\pi^3 n^3}\left(\pi ^2 n^2 y+e^{-\pi ^2n^2y}\right) -
$$
$$
-2\sum\limits_{n=1}^\infty\frac{(-1)^n}{\pi n} \sin(\pi n x)e^{-n^2\pi^2 y}.
$$
The plot where the first 100 terms of the series are taken is shown in Fig. \ref{Ris1}
We can see that the solution from the domain $D_1$ continuously passes into the solution in the domain $D_2$.
\begin{figure}[h!]
	\center{\includegraphics[width=1\linewidth]{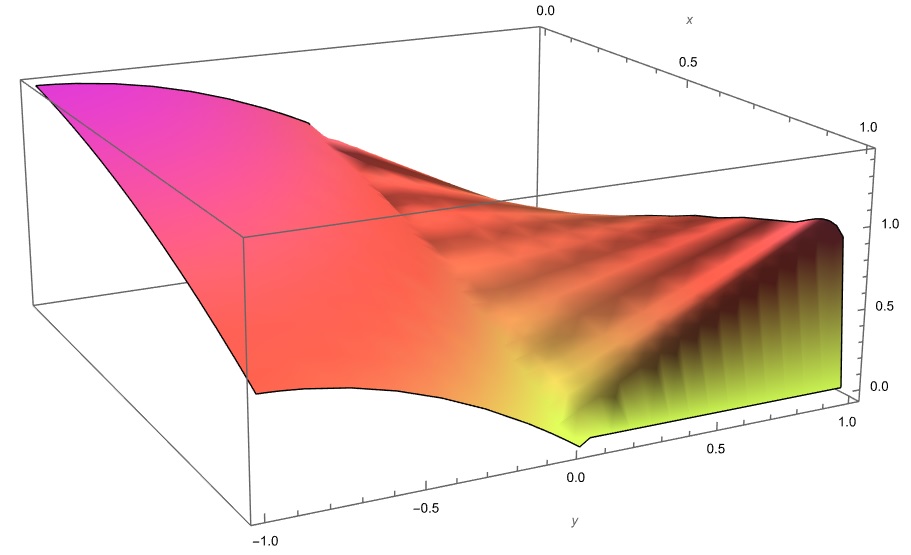}}
	\caption{Solution $u(x,y)$.}\label{Ris1}
\end{figure}

\end{document}